\documentclass[a4paper]{article}
\usepackage{amsmath}
\usepackage{amsfonts}
\usepackage{amssymb}
\usepackage{amsthm}
\usepackage{mathtools}
\usepackage{tikz}
\usepackage{tikz-cd}
\usepackage{enumerate}
\usetikzlibrary{patterns}
\DeclareMathOperator{\id}{id}

\DeclareMathOperator{\Aut}{Aut}

\newcommand{\iso}{\ensuremath{\cong}}
\newcommand{\Z}[1][]{\ensuremath{\mathbb{Z}_{#1}}}
\newcommand{\Q}[1][]{\ensuremath{\mathbb{Q}_{#1}}}
\newcommand{\R}{\ensuremath{\mathbb{R}}}

\newcommand{\N}{\ensuremath{\mathbb{N}}}

\newcommand{\F}{\ensuremath{\mathbb{F}}}

\newcommand{\nsgp}[1][]{\ensuremath{\triangleleft_{#1}}}

\newcommand{\gp}[1]{\ensuremath{\langle #1\rangle}}

\newcommand{\GL}[2]{\ensuremath{\mathrm{GL}_{#1}(#2)}}
\newcommand{\invlim}{\ensuremath{\varprojlim}}
\newtheorem{theorem}{Theorem}[section]
\newtheorem*{thmquote}{Theorem}
\newtheorem{prop}[theorem]{Proposition}

\newtheorem{clly}[theorem]{Corollary}
\newtheorem{question}[theorem]{Question}
\theoremstyle{definition}
\newtheorem{defn}[theorem]{Definition}

\theoremstyle{remark}
\newtheorem*{rmk}{Remark}

\theoremstyle{plain}

\newcounter{introthmcount}
\setcounter{introthmcount}{0}

\theoremstyle{definition}

\usepackage{xcolor}
\usepackage{inputenc}
\usepackage{enumerate}

\DeclareMathOperator{\Isom}{Isom}
\DeclareMathOperator{\Aff}{Aff}

\newcommand{\set}[1]{\{#1\}}
\newcommand{\cgp}{crystallographic group}
\newcommand{\cgps}{crystallographic groups}

\title{Distinguishing crystallographic groups by their finite quotients}
\author{Pawe\l\ Piwek, David Popovi\'c and Gareth Wilkes} 

\begin{document}
\maketitle
\begin{abstract}
Using the computer algebra program \texttt{GAP}, we show that all crystallographic groups in dimensions at most 4 are distinguished from each other by their sets of finite quotients.
\end{abstract}	
	\section{Introduction}
	The classification of the isomorphism types of \cgps \ in a given dimension is an old problem. Fedorov and Schonflies gave in 1895 a list of the 219 \cgp \ types in three dimensions. This list was later extended to dimension 4 \cite{BBNWZ}, and with the assistance of computers Plesken and Schultz \cite{PS} have calculated the number of space groups in dimensions 5 and 6 and presented an algorithm for constructing them on demand.
	
    There has been much recent study of whether residually finite groups, or classes of residually finite groups, may be distinguished from each other by their sets of finite quotient groups---particularly those groups connected with low-dimensional geometry or topology. Examples of pairs of distinct groups with the same sets of finite quotients have been found by Baumslag \cite{baum74}, Hempel \cite{hempel14}, Stebe \cite{stebe72} and others. `Rigidity' results showing that certain groups are distinguished by finite quotients (either among all residually finite groups or among some subclass) include \cite{BCR14, BMRS, Wilkes17a}.
	
   The examples of Baumslag \cite{baum74} just cited are virtually abelian (that is, possess an abelian normal subgroup of finite index). The crystallographic groups are a natural interesting family of virtually abelian groups, so we ask whether such groups can share the same families of finite quotients. (The examples of Baumslag, of the form $\Z/25\Z \rtimes \Z$, are not crystallographic groups.)	
	
	\begin{question}\label{mainquestion}
		For non-isomorphic \cgps \ $\Gamma_1$ and $\Gamma_2$, must $\Gamma_1$ and $\Gamma_2$ have different sets of finite quotient groups?
	\end{question}
	
This question is known to have a negative answer in the generality stated above: see \cite{FNP80} for an example of non-isomorphic crystallographic groups of dimension 22 with the same finite quotients. 

	This paper studies Question \ref{mainquestion} for \cgps\ in dimensions $n=2$, $3$ and $4$ and gives a positive answer. We perform computations with the assistance of \texttt{GAP}, using methods closely related to the classical Fundamental Theorem of Crystallography.

It should be mentioned that the question of whether crystallographic groups can share the same set of finite questions (or, in other terminology, `belong to the same genus') has been considered elsewhere. In particular see Holt and Plesken \cite{HP89} for an extensive discussion of the genus of {\em perfect} space groups in dimensions at most 10.
	
	\section{Background on \cgps}
	
	In this section we introduce the basic results on \cgps, using \cite{Hiller} and \cite{Szcz} as sources. Let $d\geq 1$ be an integer.
	
	\begin{defn}
		Let $\Isom(d)$ denote the group of isometries of $\R^d$ (with the Euclidean metric). A subgroup $\Gamma \le \Isom(d)$ which is discrete and cocompact is called a \emph{\cgp\ of dimension $d$}. A crystallographic group is sometimes also called a {\em space group}.
	\end{defn}
	
	A choice of basepoint $0$ for $\R^d$ gives an identification of $\Isom(d)$ as a semidirect product $\R^d \rtimes \mathrm{O}(d)$, with the following notation for group elements. The isometry $x \mapsto a+ Ax$ for $a\in \R^d$ and $A \in \mathrm{O}(d)$ will be denoted by $(a,A)$. The semidirect product structure is given by:
	$$(a,A) (b,B) = (a+Ab,AB).$$
	The identification $\Isom(d) \iso \R^d\rtimes \mathrm{O}(d)$ is not unique, but the normal subgroup $\R^d \nsgp \Isom(d)$ of translations is the same for all such choices of identification. In consequence, the following is a meaningful definition.
	
	\begin{defn}
		Let $\Gamma$ be a \cgp{} of dimension $d$. The group of \emph{translations in $\Gamma$} is the subgroup $M = \set{(a,I) \in \Gamma}$.
	\end{defn}
	
	\begin{theorem}[First Bieberbach Theorem. See Section 2.1 of \cite{Szcz}.]
		\label{thm:1stBieberbach}
		Let $\Gamma$ be a \cgp{} of dimension $d$ with group of translations $M \le \Gamma$. Then:
		\begin{enumerate}[(i)]
			\item $M$ is isomorphic to $\Z^d$;
			\item $M$ is maximal abelian; and
			\item $M$ is normal in $\Gamma$ with finite index.
		\end{enumerate}
	\end{theorem}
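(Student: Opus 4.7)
The plan is to prove the three conclusions in the order (iii), (i), (ii), since each step relies on the previous.

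For (iii), the projection $\pi\colon \Isom(d) \to \mathrm{O}(d)$ defined by $(a,A) \mapsto A$ is a surjective homomorphism whose kernel is exactly the translation subgroup $\R^d$. Thus $M = \Gamma \cap \ker \pi$ is normal in $\Gamma$ and the quotient $\Gamma/M$ embeds in $\mathrm{O}(d)$. Finiteness of $[\Gamma : M]$ therefore reduces to finiteness of the \emph{point group} $P := \pi(\Gamma) \le \mathrm{O}(d)$; since $\mathrm{O}(d)$ is compact, it suffices to prove that $P$ is discrete.

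The heart of the argument, and the step I expect to be the main obstacle, is establishing discreteness of $P$. The plan is the classical Bieberbach commutator trick. Given $\gamma_i = (a_i, A_i) \in \Gamma$, a direct computation using the semidirect product structure gives
\[ [\gamma_1, \gamma_2] = (w,\, [A_1, A_2]) \]
for some translation vector $w$ depending on the $a_i$ and $A_i$. The crucial feature is that if $A_1$ and $A_2$ are close to a common element of $\mathrm{O}(d)$, then $[A_1, A_2]$ is \emph{quadratically} closer to the identity. If $P$ had an accumulation point, the idea is to pick representatives in $\Gamma$ whose translational parts are controlled by using cocompactness of the action on $\R^d$, and then iterate the commutator construction to produce a sequence of pairwise distinct elements of $\Gamma$ converging to $(0, I)$, contradicting discreteness of $\Gamma$ in $\Isom(d)$. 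The delicate point is the simultaneous control of both rotational and translational parts through the iteration; one standard route is a Zassenhaus-style bound on how far $0$ is moved by iterated commutators.

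With (iii) proved, $M$ is a discrete subgroup of $\R^d$ which still acts cocompactly on $\R^d$ (having finite index in the cocompact $\Gamma$), so the structure theorem for discrete cocompact subgroups of $\R^d$ yields $M \iso \Z^d$ as a full-rank lattice, proving (i). For (ii), $M$ is abelian as a subgroup of the translation group, so maximality reduces to showing $C_\Gamma(M) \le M$. If $\gamma = (a, A) \in \Gamma$ centralises $M$, then for every $(v, I) \in M$,
\[ (v, I) = \gamma (v, I) \gamma^{-1} = (Av, I), \]
forcing $Av = v$ for every $v$ in the lattice $M$. Since $M$ spans $\R^d$ by (i), this forces $A = I$, whence $\gamma \in M$, as required.
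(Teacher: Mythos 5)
Your reductions of (i) and (ii) to (iii) are fine: the centraliser computation for maximal abelianness is correct, and passing from finite index in a cocompact group to cocompactness of $M$, hence to $M\iso\Z^d$ as a full-rank lattice, is standard. The problem is that the actual content of the First Bieberbach Theorem --- the finiteness of the point group $P=\pi(\Gamma)$, equivalently the fact that the translations of $\Gamma$ span $\R^d$ --- is exactly the step you leave as a plan rather than a proof. Note that discreteness of $P$ is \emph{not} a formal consequence of discreteness of $\Gamma$: for a general discrete subgroup of $\Isom(d)$ the rotational parts can accumulate (indeed can be dense in a positive-dimensional subgroup of $\mathrm{O}(d)$), so cocompactness must enter in an essential, quantitative way. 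Your sketch gestures at the right ingredients (the commutator identity $[\gamma_1,\gamma_2]=(w,[A_1,A_2])$, the quadratic contraction of $\|[A_1,A_2]-I\|$, a Zassenhaus-type bound, choosing representatives with bounded translational part via a compact fundamental domain), but it never assembles them: you do not show that the iterated commutators are nontrivial and pairwise distinct (the iteration could simply terminate at the identity, which yields no contradiction by itself but rather commutation relations that must then be exploited), you do not verify that the translational parts $w$ stay bounded under iteration (this is precisely the ``delicate point'' you name and then defer), and you never extract from the construction the conclusion that the subgroup of translations in $\Gamma$ has rank $d$. As it stands, the argument for (iii) is a statement of intent, and since (i) and (ii) are derived from (iii), the whole proof rests on an unproved core.

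For comparison, the paper does not prove this theorem at all: it is quoted as a classical result with a reference to Section 2.1 of Szczepa\'nski's book, where the full commutator/Zassenhaus argument you allude to is carried out. So your outline is pointed in the right direction, but to count as a proof it would need the quantitative lemmas (e.g.\ that any $(a,A)\in\Gamma$ with $\|A-I\|$ below an explicit threshold must have $A$ trivial, or the analogous statement after passing to commutators) stated and proved, together with the argument that cocompactness then forces the lattice of translations to span $\R^d$.
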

	
It is important to note that these properties {\em characterize} the translation subgroup.

\begin{prop}
Let $G$ be a group and let $A$ and $B$ be torsion-free abelian subgroups of $G$ which are finite index, normal, and maximal abelian. Then $A= B$.
\end{prop}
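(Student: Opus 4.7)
The plan is to deduce the equality $A = B$ from the single assertion that $A$ and $B$ commute elementwise. Once $[A,B] = 1$ is established, the product $AB$ (which is a subgroup because $A$ is normal) is an abelian overgroup of $A$, so maximality of $A$ forces $AB = A$ and hence $B \subseteq A$; a symmetric argument then yields $A \subseteq B$. So the whole task reduces to showing that commutators $[a,b]$ with $a \in A$ and $b \in B$ are trivial.

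The first observation is that such a commutator automatically lies in $A \cap B$. Writing $[a,b] = a \cdot (b a^{-1} b^{-1})$ shows $[a,b] \in A$ by normality of $A$, and writing $[a,b] = (a b a^{-1}) \cdot b^{-1}$ shows $[a,b] \in B$ by normality of $B$. Next, for each fixed $b \in B$, I would check that the map $\phi_b \colon A \to A \cap B$ defined by $\phi_b(a) = [a,b]$ is a group homomorphism. This is a short expansion of $\phi_b(a_1 a_2)$ in which one uses both that the intermediate element $[a_2,b]$ lies in $A$ (by the previous observation) and that $A$ is abelian, in order to slide $[a_2,b]$ past $a_1$.

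The finishing move exploits torsion-freeness. Since $B$ is abelian, every element of $A \cap B$ commutes with $b$ and thus lies in $\ker \phi_b$; so $\phi_b$ factors through the finite group $A/(A \cap B)$, and its image is therefore a finite subgroup of $A \cap B$. But $A \cap B$ sits inside the torsion-free group $A$, so the image must be trivial. Hence $\phi_b$ is the zero map for every $b \in B$, which is exactly the statement $[A,B] = 1$.

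I do not expect any serious obstacle: each of the four hypotheses (normality, finite index, torsion-freeness, maximal abelian) plays a visible and distinct role, and the only step worth writing out carefully is the verification that $\phi_b$ is a homomorphism, since this is where abelianness of $A$ must be invoked to make the commutator identity collapse in the right way.
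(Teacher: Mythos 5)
Your proof is correct, and at the top level it follows the same strategy as the paper: show that $A$ and $B$ centralize each other and then let maximal abelianness finish the job. The mechanism by which you obtain $[A,B]=1$ is different, though. The paper argues elementwise: for $a\in A$ and $b\in B$ it picks $n$ with $a^n\in B$ (finite index of $B$), notes that $b$ commutes with $a^n$ since $B$ is abelian, and then deduces $b^{-1}ab=a$ from $(b^{-1}ab)^n=a^n$, using that $n$-th roots are unique in the torsion-free abelian group $A$ (normality of $A$ puts $b^{-1}ab$ in $A$). You instead package the same ingredients into the commutator homomorphism $\phi_b\colon a\mapsto[a,b]$, landing in $A\cap B$ by normality of \emph{both} subgroups, killing $A\cap B$ because $B$ is abelian, hence factoring through the finite group $A/(A\cap B)$ and having trivial image since torsion-free groups have no nontrivial finite subgroups. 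Your route is slightly more structural (the standard commutator-map trick) and your verification that $\phi_b$ is a homomorphism, via $[a_1a_2,b]=a_1[a_2,b]a_1^{-1}[a_1,b]$ and abelianness of $A$, is exactly the point that needs care, as you say; the paper's route is a little shorter and only needs normality of $A$, using finite index and torsion-freeness in the form ``$a^n\in B$'' and ``uniqueness of roots'' rather than ``$[A:A\cap B]<\infty$'' and ``no finite subgroups.'' The concluding steps also differ only cosmetically: the paper contradicts maximality of $A$ via $\langle b,A\rangle$ for a hypothetical $b\in B\smallsetminus A$, while you apply maximality to the abelian subgroup $AB$ and then symmetrize.
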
	
\begin{proof}
Suppose there is an element $b\in B\smallsetminus A$. We show that $\gp{b, A}$ is abelian, a contradiction. Let $a\in A$. There exists $n\in\N$ such that $a^n$ lies in the finite index subgroup $B$ of $G$. Then $a^n = b^{-1}a^n b = (b^{-1}ab)^n$. By normality, $b^{-1}ab\in A$, so since $A$ is torsion-free abelian it follows that $a= b^{-1}ab$ as required.
\end{proof}
\begin{clly}\label{prop:uniqueMAsubgp}
The translation subgroup of a crystallographic group $\Gamma$ is the {\em unique} finite index normal torsion-free maximal abelian subgroup of $\Gamma$.
\end{clly}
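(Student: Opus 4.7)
The plan is to observe that Corollary \ref{prop:uniqueMAsubgp} is essentially an immediate consequence of combining Theorem \ref{thm:1stBieberbach} (the First Bieberbach Theorem) with the Proposition stated just before it. The Bieberbach Theorem guarantees existence of a subgroup with the desired four properties, and the Proposition provides the uniqueness.

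More concretely, I would argue as follows. First, I would verify that the translation subgroup $M$ itself has all of the claimed properties: Theorem \ref{thm:1stBieberbach}(iii) tells us $M$ is normal of finite index in $\Gamma$, Theorem \ref{thm:1stBieberbach}(ii) tells us $M$ is maximal abelian, and Theorem \ref{thm:1stBieberbach}(i) gives $M \iso \Z^d$, in particular making $M$ torsion-free. So $M$ is an example of a finite index normal torsion-free maximal abelian subgroup of $\Gamma$, establishing existence.

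For uniqueness, suppose $N \le \Gamma$ is any other finite index, normal, torsion-free, maximal abelian subgroup. Then both $M$ and $N$ satisfy the hypotheses of the preceding Proposition applied to the ambient group $G = \Gamma$: they are both torsion-free abelian, both normal of finite index, and both maximal abelian. The Proposition therefore yields $M = N$, completing the proof.

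There is no real obstacle here; the content has already been absorbed into the preceding Proposition, and the corollary simply packages that abstract statement together with the concrete information provided by the First Bieberbach Theorem about the translation subgroup. The only thing to be careful about is verifying all four hypotheses (finite index, normal, torsion-free, maximal abelian) are indeed supplied by the First Bieberbach Theorem, which they are.
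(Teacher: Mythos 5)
Your proof is correct and matches the paper's intended argument exactly: the corollary is stated without further proof precisely because it follows by combining Theorem \ref{thm:1stBieberbach} (which supplies the four properties for the translation subgroup) with the preceding Proposition applied with $G=\Gamma$ (which gives uniqueness). Nothing is missing.
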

	
	\begin{defn}
		Let $\Gamma$ be a \cgp. We define its \emph{point group} to be:
		$$G = \set{A \in \mathrm{O}(d)\mid(a,A) \in \Gamma \:\textrm{for some} \: a \in \R^d}.$$
	\end{defn}
	
	\begin{clly}
		Let $\Gamma$ be a \cgp{} with point group $G$ and group of translations $M$. Then $G$ is isomorphic to the finite group $\Gamma / M$.
	\end{clly}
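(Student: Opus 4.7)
The plan is to realise $G$ as the image of a natural surjective homomorphism from $\Gamma$, with kernel precisely $M$, and then invoke the first isomorphism theorem together with part (iii) of Theorem~\ref{thm:1stBieberbach}.

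After fixing the basepoint used to identify $\Isom(d) \cong \R^d \rtimes \mathrm{O}(d)$, I would consider the projection map
\[
\pi \colon \R^d \rtimes \mathrm{O}(d) \longrightarrow \mathrm{O}(d), \qquad (a,A) \longmapsto A.
\]
The semidirect product multiplication $(a,A)(b,B) = (a+Ab, AB)$ makes it immediate that $\pi$ is a group homomorphism. Restricting $\pi$ to $\Gamma$, the image is, by the very definition of the point group, equal to $G$; and the kernel is the set of elements of $\Gamma$ of the form $(a,I)$, which is by definition $M$. The first isomorphism theorem therefore gives
\[
\Gamma/M \;\iso\; G.
\]

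Finiteness of $G$ then follows at once: by Theorem~\ref{thm:1stBieberbach}(iii), $M$ has finite index in $\Gamma$, so $\Gamma/M$ is a finite group, and hence so is $G$.

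There is no real obstacle here; the argument is a formal application of the isomorphism theorem once one has the correct map in hand. The only point requiring a brief comment is that the identification $\Isom(d) \cong \R^d\rtimes \mathrm{O}(d)$, and consequently the subgroup $G \le \mathrm{O}(d)$, depends on the choice of basepoint for $\R^d$. However, the subgroup $M \nsgp \Gamma$ does not depend on this choice, as was already noted in the paragraph preceding the definition of $M$, so the isomorphism class of $\Gamma/M$ is intrinsic, and $G$ is determined up to conjugation in $\mathrm{O}(d)$; in particular its isomorphism type is well defined regardless of the identification chosen.
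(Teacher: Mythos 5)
Your proof is correct and is exactly the argument the paper has in mind: the corollary is stated without proof, being an immediate consequence of restricting the projection $(a,A)\mapsto A$ to $\Gamma$, applying the first isomorphism theorem, and using Theorem~\ref{thm:1stBieberbach}(iii) for finiteness. Nothing further is needed.
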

	
	In order to check properties of crystallographic groups algorithmically, it is of course important that there are only finitely many of any given dimension.
	
	\begin{theorem}[Second Bieberbach Theorem, Section 2.2 of \cite{Szcz}]
		\label{thm:2ndBieberbach}
		For any number $d$, there are only finitely many isomorphism classes of \cgps{} of dimension $d$.
	\end{theorem}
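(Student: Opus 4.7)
The plan is to exploit the extension structure provided by the First Bieberbach Theorem. For any $d$-dimensional crystallographic group $\Gamma$ with translation subgroup $M$ and point group $G$, we have a short exact sequence
\[ 1 \to M \to \Gamma \to G \to 1, \]
with $M \cong \Z^d$ and $G$ finite. Conjugation on $M$ gives an embedding $G \hookrightarrow \Aut(M) \cong \GL{d}{\Z}$, and once this action is fixed, the isomorphism class of the extension is classified by an element of $H^2(G; \Z^d)$. My aim is to argue separately that (a) there are only finitely many possibilities for $G$ together with its action on $M$, and (b) given such data, only finitely many extensions are possible.

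For step (a), the key input is Minkowski's lemma: for any odd prime $p$, the kernel of the reduction map $\GL{d}{\Z} \to \GL{d}{\Z/p\Z}$ is torsion-free. Consequently, every finite subgroup of $\GL{d}{\Z}$ injects into the finite group $\GL{d}{\F_3}$, so has order bounded by a function of $d$. To promote the order bound to a bound on the number of $\GL{d}{\Z}$-conjugacy classes of such subgroups, I would invoke the Jordan--Zassenhaus theorem: for each (finite) abstract group $H$, there are only finitely many isomorphism classes of $\Z H$-lattices of a given rank. Since there are only finitely many abstract finite groups of bounded order, and each admits only finitely many actions on a rank $d$ lattice up to $\GL{d}{\Z}$-conjugacy, finitely many conjugacy classes result.

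For step (b), fix a finite subgroup $G \leq \GL{d}{\Z}$ acting on $M = \Z^d$. The extensions of $G$ by $M$ inducing this action are classified up to equivalence by $H^2(G; M)$, which is finite because $G$ is finite and $M$ is finitely generated. Each equivalence class gives a specific abstract group, and distinct cohomology classes may yield isomorphic abstract groups, but this identification can only decrease the count.

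Combining (a) and (b), only finitely many abstract groups can arise as crystallographic groups of dimension $d$. The main obstacle is step (a); the order bound from Minkowski is elementary, but upgrading it to finitely many conjugacy classes genuinely requires Jordan--Zassenhaus (or an equivalent argument using the finiteness of reduction classes of positive definite integral lattices of given rank, obtained by averaging the standard inner product over $G$ to produce a $G$-invariant form and then applying reduction theory).
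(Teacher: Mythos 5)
Your argument is correct and follows essentially the same route the paper relies on (the paper itself only cites Szczepa\'nski for this theorem, but its background section assembles exactly your ingredients): the First Bieberbach Theorem to realise $\Gamma$ as an extension of a finite point group $G$ acting faithfully on $M\iso\Z^d$, Minkowski plus Jordan--Zassenhaus to get finitely many conjugacy classes of finite subgroups of $\GL{d}{\Z}$, and finiteness of $H^2(G,M)$ to bound the extensions for each crystal class. No gaps worth flagging.
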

		
Let $\Gamma$ be a crystallographic group. Due to Theorem~\ref{thm:1stBieberbach}, there is a short exact sequence
	\begin{equation}
	\label{eq:SES}
	0 \to M \to \Gamma \to G \to 1
	\end{equation}
	where $M$ and $G$ are the translation and point groups of $\Gamma$ respectively. By convention we write $M$ additively and $G$ multiplicatively. Since $M$ is a normal subgroup, $\Gamma$ acts on it by conjugation:
	$$(a,A)(b,I)(a,A)^{-1} = (a+Ab,A)(-A^{-1}a,A^{-1}) = (Ab, I).$$
	
	It is clear that the translation part $a$ acts trivially, so in fact, we can consider this to be an action of $G$ on $M$. The action is faithful since, again according to Theorem~\ref{thm:1stBieberbach}, $M$ is a maximal abelian subgroup. This allows us to treat $M$ as a $\Z G$-module and, after choosing a basis for $M\iso\Z^d$, identify $G$ with a subgroup of $\GL{d}{\Z}$.
	
	\begin{defn}
		For a given $G \le \mathrm{O}(d)$ that acts on a $d$-dimensional lattice $M \le \R^d$ we define the \emph{(G,M) crystal class} to be the set of all space groups $\Gamma$ with $M$ being the group of translations and $G$ being the point group. 
		
		We say that two crystal classes $(G,M)$ and $(G',M')$ are \emph{\Z-equivalent} if there exist isomorphisms $\psi\colon G\to G'$ and $\phi\colon M \to M'$ such that 
		\[\phi(g\cdot m) = \psi(g)\cdot \phi(m) \]
for all $g\in G, m\in M$. When we regard $G$ and $G'$ as subgroups of $\GL{d}{\Z}$, this is equivalent to saying that $G$ and $G'$ are conjugate within $\GL{d}{\Z}$.
		
		We define the relation of \emph{\Q-equivalence}, as conjugacy of $G$ and $G'$ in \GL{d}{\Q}.
		
We abbreviate `\Z-(\Q-)equivalence class' to `\Z-(\Q-)class'.	Alternative terms for \Z-class and \Q-class are {\em arithmetic crystal class} and {\em geometric crystal class} respectively.
	\end{defn}
		
	\begin{thmquote}[Jordan--Zassenhaus: see \cite{CR66}, Theorem 79.1]
		For any integer $d\geq 1$, there are finitely many conjugacy classes of finite subgroups of $\GL{d}{\Z}$. In particular, there are only finitely many possible $\Z$-equivalence classes of a given dimension.
	\end{thmquote}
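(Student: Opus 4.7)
The plan is to recast the claim as a statement about integral representations of finite groups, and then combine Minkowski's bound on torsion in $\GL{d}{\Z}$ with the classical Jordan-Zassenhaus finiteness theorem for $\Z G$-lattices inside a fixed rational representation.

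First, I would reinterpret the data. A finite subgroup $G\le \GL{d}{\Z}$ is exactly a faithful action of a finite group on $\Z^d$, equivalently a faithful $\Z G$-module structure on $\Z^d$ which is free of rank $d$ over $\Z$. Two such subgroups are conjugate in $\GL{d}{\Z}$ precisely when the resulting lattices are isomorphic as $\Z G$-modules, after identifying the two groups via the conjugating matrix. The theorem therefore becomes: there are only finitely many isomorphism classes of pairs $(G,L)$ with $G$ finite and $L$ a faithful $\Z G$-lattice of $\Z$-rank $d$. The ``in particular'' assertion about $\Z$-equivalence classes is then immediate, since the excerpt has already identified $\Z$-equivalence of crystal classes with $\GL{d}{\Z}$-conjugacy of point groups.

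To control the abstract group $G$, I would invoke Minkowski's lemma: for any prime $p\ge 3$, the reduction-modulo-$p$ map is injective on any finite subgroup of $\GL{d}{\Z}$, because no nontrivial torsion element of $\GL{d}{\Z}$ can be congruent to the identity modulo such a prime. Applying this with $p=3$ embeds $G$ into the finite group $\GL{d}{\mathbb{F}_3}$, bounding $|G|$ by a function of $d$ and so restricting the isomorphism type of $G$ to finitely many possibilities. For each such $G$, the rational extension $V=\Q\otimes_{\Z}L$ is a $\Q G$-module of $\Q$-dimension $d$; and since $\Q G$ is semisimple by Maschke with only finitely many isomorphism classes of simple modules, $V$ itself belongs to one of finitely many isomorphism classes of $\Q G$-modules.

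The main obstacle is the remaining step: fixing $G$ and $V$, one must show that $V$ contains only finitely many isomorphism classes of $\Z G$-lattices. This is Jordan-Zassenhaus proper, whose proof is not elementary. After decomposing $\Q G$ into its simple factors via Wedderburn, one is reduced to a class-number-type finiteness statement for orders in central simple algebras over number fields, proved by local-global methods generalising Minkowski's finiteness of the class number. This is the deep ingredient provided by \cite{CR66}, Theorem 79.1. Combining the three finitenesses --- of the abstract group $G$, of the $\Q G$-module $V$ for each such $G$, and of $\Z G$-lattices inside each such $V$ --- delivers the theorem.
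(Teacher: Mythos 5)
The paper offers no proof of this statement at all: it is quoted as a classical theorem, with the citation to Curtis--Reiner standing in for the hard content. Your outline is the standard derivation and is sound --- Minkowski's injectivity of reduction mod $3$ to bound $|G|$, semisimplicity of $\Q G$ to restrict the rational module to finitely many types, and then the genuine Jordan--Zassenhaus finiteness of $\Z G$-lattice classes, which you defer to exactly the same reference the paper relies on --- so your proposal is correct and, if anything, more detailed than the paper's treatment, while resting on the same deep ingredient.
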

	We must also be able to quantify how many \cgps{} up to isomorphism there are in each arithmetic crystal class.
		
	Each space group is an extension of $G$ by $M$, and such extensions are classified (up to equivalence of extensions) by elements of $H^2(G,M)$ \cite[Section IV.3]{Brown}, which is a finite group \cite[Proposition III.10.1]{Brown}. It remains to decide which extension classes give rise to isomorphic groups.
	
	\begin{theorem}[Third Bieberbach Theorem, Section 2.3 of \cite{Szcz}]
		\label{thm:3rdBieberbach}
		Any two \cgps{} $\Gamma, \Gamma' \le \Isom(d)$ that are isomorphic as abstract groups are also conjugate in the group $\Aff(d)$ of affine homeomorphisms of $\R^d$.
	\end{theorem}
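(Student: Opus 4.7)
The plan is to construct the affine conjugator $\alpha \in \Aff(d)$ directly from an abstract isomorphism $\phi\colon \Gamma \to \Gamma'$. By Corollary \ref{prop:uniqueMAsubgp}, $\phi$ must send the translation subgroup $M$ onto $M'$, and consequently induces an isomorphism $\bar\phi\colon G \to G'$ between the point groups. The restriction $\phi|_M \colon M \to M'$ extends by $\R$-linearity to an invertible map $L \in \GL{d}{\R}$, because both $M$ and $M'$ are full-rank lattices in $\R^d$. I aim to take $\alpha = (t, L)$, so only the translation vector $t \in \R^d$ still needs to be pinned down.

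A direct semidirect-product computation (valid for $L \in \GL{d}{\R}$, not merely for orthogonal matrices) gives
\[ (t, L)(a, A)(t, L)^{-1} = \bigl(La + (I - LAL^{-1})t,\; LAL^{-1}\bigr). \]
Writing $\phi(a, A) = (a', A')$, the condition $\alpha (a, A) \alpha^{-1} = \phi(a, A)$ for all $(a,A)\in\Gamma$ splits into a linear-part requirement $LAL^{-1} = A'$ and a translation-part requirement $a' - La = (I - A')t$. The linear-part requirement follows from the $G$-equivariance of $\phi|_M$: for $m \in M$ one has $\phi(Am) = A'\phi(m)$ (this is just $\phi$ applied to the conjugation action on $M$), so $LA$ and $A'L$ agree on $M$ and hence on all of $\R^d$ by $\R$-linearity.

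The main obstacle is to produce a single $t$ satisfying the translation-part requirement uniformly in $g \in G$. Define $c(g) := a' - La$ for any lift $(a, A)$ of $g$; one verifies that $c$ is well-defined on $G$ (since $L|_M = \phi|_M$ absorbs the ambiguity of choosing a lift) and that the intertwining relation $LA = A'L$ yields the cocycle identity $c(gh) = c(g) + A' \cdot c(h)$, with $G$ acting on $\R^d$ via $\bar\phi$ composed with the point-group representation of $\Gamma'$. Finding the required $t$ amounts to exhibiting $c$ as a coboundary, i.e.\ to the vanishing of $H^1(G, \R^d)$. This holds because $G$ is finite and $\R^d$ is a $\Q$-vector space, so the standard averaging formula
\[ t := \frac{1}{|G|} \sum_{g \in G} c(g) \]
satisfies $(I - A')t = c(g)$ for every $g$, by a short reindexing argument using the cocycle identity. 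The resulting $\alpha = (t, L) \in \Aff(d)$ satisfies $\alpha \Gamma \alpha^{-1} = \Gamma'$, as required; note incidentally that each $LAL^{-1} = A'$ automatically lies in $\mathrm{O}(d)$, so the conjugate group does live inside $\Isom(d)$.
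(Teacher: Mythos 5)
Your proof is correct, and it is essentially the classical argument: the paper does not prove this theorem itself but cites Section 2.3 of \cite{Szcz}, where the proof proceeds exactly as you do --- use the characterisation of the translation subgroup to get an induced lattice isomorphism, extend it to $L\in\GL{d}{\R}$ intertwining the point-group actions, and then kill the resulting translational discrepancy, which is a $1$-cocycle, by the averaging argument expressing the vanishing of $H^1(G,\R^d)$ for $G$ finite. All the steps you leave as ``one verifies'' (well-definedness of $c$, the cocycle identity, and the reindexing in the averaging) do check out.
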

	
	Theorem~\ref{thm:3rdBieberbach} tells us that we may classify all \cgps{} $\Gamma$ in the $(G,M)$ crystal class by considering the action on $H^2(G,M)$ of affine maps that preserve $M$ and $G$. This boils down to the following statement. See for example \cite[Theorem 5.2]{Hiller} for an elementary account.
	
	\begin{theorem}[Fundamental Theorem of Mathematical Crystallography]
		Let $(G,M)$ be a crystal class. There is a one-to-one correspondence between the isomorphism classes of groups in the crystal class $(G,M)$ and the orbits of the action of $N = {\cal N}_{\Aut(M)}(G)$ on $H^2(G,M)$.
	\end{theorem}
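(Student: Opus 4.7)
The plan is to leverage the standard classification of extensions of $G$ by $M$ (with the specified $G$-action) by $H^2(G,M)$, and then describe precisely when two cohomologically-inequivalent extensions yield abstractly isomorphic space groups. Concretely, I would realise each extension as $\Gamma_f = M \times G$ with product $(m,g)(m',g') = (m + g\cdot m' + f(g,g'), gg')$ for a normalised $2$-cocycle $f$; the standard theory gives $\Gamma_f \iso \Gamma_{f'}$ \emph{as extensions} iff $[f]=[f']$, so the task is to compare equivalence of extensions with abstract isomorphism.

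For the easier direction, I would define the $N$-action on $H^2(G,M)$ explicitly: an element $\phi \in N$ induces by conjugation an automorphism $\bar\phi \in \Aut(G)$ and sends the class of $f$ to the class of $\phi \circ f \circ (\bar\phi^{-1}\times \bar\phi^{-1})$. A short check confirms that this descends to cohomology (coboundaries go to coboundaries), and the map $(m,g) \mapsto (\phi(m), \bar\phi(g))$ is a group isomorphism $\Gamma_f \to \Gamma_{\phi\cdot f}$ by a direct cocycle calculation.

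For the substantive converse, suppose $\Psi\colon \Gamma_f \to \Gamma_{f'}$ is an abstract isomorphism. The key input is Corollary \ref{prop:uniqueMAsubgp}: $M$ is \emph{characterised} inside $\Gamma$ as the unique finite-index normal torsion-free maximal abelian subgroup, so $\Psi(M) = M$. Hence $\phi := \Psi|_M \in \Aut(M)$ and $\Psi$ descends to some $\bar\Psi \in \Aut(G)$. Because the $G$-action on $M$ is realised by conjugation inside $\Gamma$ (that is, $\tilde g\, m\, \tilde g^{-1} = g\cdot m$ for any lift $\tilde g$ of $g$), applying $\Psi$ yields $\phi(g\cdot m) = \bar\Psi(g)\cdot \phi(m)$, i.e.\ $\phi G \phi^{-1} = G$ inside $\Aut(M)$, so $\phi \in N$ and $\bar\Psi$ is the conjugation-by-$\phi$ automorphism $\bar\phi$. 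A final mechanical cocycle computation---taking $\Psi\circ s\circ \bar\phi^{-1}$ as a section of $\Gamma_{f'} \to G$, where $s$ is any set-theoretic section of $\Gamma_f \to G$---recovers $\phi\circ f\circ (\bar\phi^{-1}\times\bar\phi^{-1})$ as a cocycle representative for $\Gamma_{f'}$, giving the desired orbit equality $[f'] = \phi\cdot[f]$ in $H^2(G,M)$.

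The only real obstacle is the characterisation of $M$ inside $\Gamma$: once we know every abstract isomorphism restricts to an automorphism of $M$, the rest is careful bookkeeping with cocycles. Fortunately Corollary \ref{prop:uniqueMAsubgp} supplies exactly this characterisation, so I anticipate no genuine difficulty; the main care needed is keeping the twists $\bar\phi^{\pm 1}$ on the correct side throughout the cohomological manipulations, and checking that the resulting correspondence between $N$-orbits and isomorphism classes is actually bijective (not merely surjective) by tracing how different choices of $\Psi$ realising the same isomorphism class produce $N$-equivalent elements $\phi$.
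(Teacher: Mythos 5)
Your argument is correct in substance, but it takes a genuinely different route from the paper's treatment of this statement. The paper does not prove the Fundamental Theorem from scratch: it invokes the Third Bieberbach Theorem (Theorem \ref{thm:3rdBieberbach}) to reduce abstract isomorphism to affine conjugacy preserving $M$ and $G$, and then refers to Hiller for the remaining bookkeeping. You bypass Bieberbach III entirely and argue algebraically: Corollary \ref{prop:uniqueMAsubgp} forces any abstract isomorphism to carry translation subgroup to translation subgroup, so it restricts to some $\phi\in{\cal N}_{\Aut(M)}(G)$ whose conjugation action realises the induced automorphism of $G$, and a comparison of cocycles attached to compatible sections yields the orbit relation; conversely a normalizer element relating the cocycles assembles into an explicit isomorphism. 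This is exactly the scheme the paper itself runs later, in the profinite setting of Section 4 (the diagram \eqref{eqnIsoOfSES} and the identity \eqref{ActionOfNormOnCocycles}), where it remarks that the analysis is ``almost identical to the discrete case''. What your route buys is self-containedness: only Bieberbach I (via Corollary \ref{prop:uniqueMAsubgp}) is needed, and the relevant consequence of Bieberbach III is in effect reproved algebraically. What the paper's route buys is brevity and the explicit geometric interpretation of the correspondence via affine conjugacy.

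One point you should not gloss over: the crystal class is defined as a set of genuine subgroups of $\Isom(d)$ with translation lattice exactly $M$ and point group exactly $G$, not as the set of abstract extensions $\Gamma_f$. Your two directions give well-definedness and injectivity of the map from isomorphism classes in the crystal class to $N$-orbits, but surjectivity onto the orbits needs the realisation statement that every class in $H^2(G,M)$ arises from an actual space group in the class. This is standard---since $G$ is finite and $\R^d$ is a $\Q$-vector space, $H^2(G,\R^d)=0$, so the cocycle becomes a coboundary over $\R^d$ and $\Gamma_f$ embeds in $\R^d\rtimes G\le\Isom(d)$ as a discrete cocompact subgroup whose translation subgroup is $M$ and whose point group is $G$---but it is a genuine ingredient of the stated bijection and should be recorded, since otherwise ``isomorphism classes of groups in the crystal class'' and ``extensions of $G$ by $M$'' have not been identified.
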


\begin{rmk}
Here and elsewhere in the paper the notation ${\cal N}_G(H)$ denotes the normalizer of a subgroup $H$ of a group $G$.
\end{rmk}

	\section{Basic results on profinite completions}
	
	This section introduces elementary results on profinite groups. The main reference is the book \cite{RZ}.
	
	\begin{defn}
		An \emph{inverse set} is a partially ordered set $I$ such that for every $i, j \in I$ there exists $k \in I$ such	that $k \leq i$ and $k \leq j$. An \emph{inverse system} is a family of sets $\{X_i\}_{i \in I }$, where $I$ is a directed set, and a family of maps $\phi_{ij} : X_i \to X_j$ whenever $i \leq j$, such that:
		\begin{itemize}
			\setlength\itemsep{0pt} 
			\item  $\phi_{ii} = \id_{X_i}$
			\item  $\phi_{jk} \phi_{ij} = \phi_{ik}$ whenever $i \leq j \leq k$.
		\end{itemize}
		Denoting this system by $(X_i, \phi_{ij}, I)$, the \emph{inverse limit} of the inverse system $(X_i, \phi_{ij}, I)$ is the set 
		$$ \invlim X_i = \{ (x_i) \in	\prod_{i \in I}^{} X_i \ \mid \ \phi_{ij}(x_i)=x_j \text{ whenever } i \leq j \}. $$
	\end{defn}
	
When the $X_i$ are groups and the $\phi_{ij}$ are group homomorphisms, the resulting inverse limit is a group. A profinite completion of a group is the following case of this general construction.
	
	\begin{defn}
		Let $G$ be a finitely generated group and let ${\cal N}$ denote the collection of all finite index normal subgroups of $G$. We can make ${\cal N}$ into an inverse set by declaring that for $N_i, N_j \in {\cal N}$ we have $N_i \leq N_j$ whenever $N_i \subseteq N_j$. In this case, there are natural epimorphisms $\phi_{N_i N_j} : G/N_i \to G/N_j$. The inverse limit of the inverse system $(G/N_i, \phi_{N_i N_j}, {\cal N})$ is the \emph{profinite completion} of $G$ and is denoted by $\widehat{G}$. 
		\end{defn}	
		Note that the maps $G\to G/N_i$ give a natural map $G\to \widehat G$. This map is injective if and only if $G$ is {\em residually finite}.
\begin{defn}
A group $G$ is {\em residually finite} if, for every $g\in G\smallsetminus \{1\}$, there exists some finite index normal subgroup $N\nsgp G$ such that $g\notin N$.
\end{defn}
	
	We equip the quotients $G/N_i$ with discrete topology and consider the topology on $\widehat{G}$ that is induced by the product topology on $\prod_{N_i \in N} G/N_i$. The following are standard facts about the topology of profinite completions.
	
	\begin{prop}[\cite{RZ}, Theorem 2.1.3]
		The profinite completion $\widehat{G}$ is a compact, Hausdorff, totally disconnected topological group.
	\end{prop}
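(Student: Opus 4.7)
The plan is to realise $\widehat{G}$ as a closed subgroup of a product of finite discrete groups, and then read off each of the three topological properties in turn. Set $P = \prod_{N \in \mathcal{N}} G/N$, equipped with the product topology where each $G/N$ carries the discrete topology. Since every factor is finite and therefore compact, Hausdorff and totally disconnected, Tychonoff's theorem together with standard facts about products shows that $P$ itself is compact, Hausdorff and totally disconnected. Coordinatewise multiplication and inversion make $P$ into a topological group.

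Next I would show that $\widehat{G}\subseteq P$ is a closed subset. For each pair $N_i\le N_j$ in $\mathcal{N}$ the compatibility condition $\phi_{ij}(x_{N_i}) = x_{N_j}$ cuts out a subset of $P$ that is the preimage of the (closed) diagonal in the finite Hausdorff space $G/N_j\times G/N_j$ under the continuous map $P\to G/N_j\times G/N_j$ sending $(x_N)\mapsto(\phi_{ij}(x_{N_i}),x_{N_j})$, and is therefore closed. Since $\widehat{G}$ is the intersection of all such closed sets as $(N_i,N_j)$ ranges over comparable pairs, it is closed in $P$. Closed subspaces of compact Hausdorff totally disconnected spaces inherit all three properties, so $\widehat{G}$ is compact, Hausdorff and totally disconnected.

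Finally I would check that $\widehat{G}$ is in fact a topological subgroup. If two tuples $(x_N),(y_N)$ both satisfy the compatibility conditions, then so do $(x_Ny_N)$ and $(x_N^{-1})$, because each bonding map $\phi_{ij}$ is a group homomorphism; so $\widehat{G}$ is closed under the group operations of $P$, and the restrictions of multiplication and inversion remain continuous. This gives the required topological group structure. No step is genuinely hard; the only point worth flagging is the appeal to Tychonoff's theorem for compactness of $P$ (this could be avoided by instead observing that the image of $\widehat{G}$ in any finite sub-product is finite, and using a direct diagonal argument, but invoking Tychonoff is cleaner).
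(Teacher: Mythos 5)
Your proof is correct and is exactly the standard argument: the paper itself gives no proof, delegating the statement to \cite{RZ} (Theorem 2.1.3), where the result is established in the same way, by realising the inverse limit as a closed subgroup of the compact, Hausdorff, totally disconnected product $\prod_{N\in\mathcal{N}} G/N$ and letting these properties pass to the closed subspace. All the individual steps you flag (Tychonoff, closedness of each compatibility condition as the preimage of a diagonal, stability of the conditions under the group operations) are sound, so there is nothing to add.
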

	
	\begin{prop}
		Let $\widehat{G}$ be a profinite completion of a group $G$.
		\begin{enumerate}[(i)]
			\setlength\itemsep{0 pt}
			\item \cite[Lemma 2.1.2]{RZ} A subgroup $U \leq \widehat{G}$ is open if and only if it is closed of finite index.
			\item \cite[Proposition 2.1.4(d)]{RZ} A subgroup $H \leq \widehat{G}$ is closed if and only if it is the intersection of all open subgroups of $\widehat{G}$ containing $H$.
		\end{enumerate}
		
	\end{prop}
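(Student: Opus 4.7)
My approach to both parts rests on the fact that, by the inverse-limit construction, $\widehat G$ carries a basis of open neighbourhoods of $1$ consisting of open normal subgroups of finite index---namely the kernels $K$ of the projections $\widehat G\to G/N_i$, which are open by the definition of the product topology and of finite index because their quotients embed into the finite groups $G/N_i$. Combined with the fact that translation in a topological group is a homeomorphism (so cosets of open/closed subgroups are again open/closed), this is essentially the only input needed.

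For part (i), in the `open implies closed of finite index' direction I would argue that an open subgroup $U$ must contain a basic neighbourhood of $1$, hence some open normal $K\nsgp\widehat G$ of finite index; then $U$ is a union of cosets of $K$, of which there are finitely many, so its complement in $\widehat G$ is a finite union of (open) cosets of $K$. This gives both closedness of $U$ and $[\widehat G : U] \le [\widehat G : K] < \infty$ at once. For the converse, if $U$ is closed of finite index, the complement is a finite union of cosets of $U$, each closed by translation, hence closed, so $U$ is open.

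For part (ii), the `if' direction is immediate from part (i), since any intersection of open (and therefore closed) subgroups is closed. The `only if' direction is where the mild work lies, and I expect it to be the main obstacle: I would fix a closed $H$ and some $g\notin H$, and need to produce an open subgroup containing $H$ but omitting $g$. Since $\widehat G\smallsetminus H$ is open and contains $g$, some basic neighbourhood $gK$ lies inside it, so $gK\cap H=\emptyset$, equivalently $g\notin HK$. Because $K$ is normal, $HK$ is a subgroup, and as a union of cosets of the open subgroup $K$ it is open. Thus $HK$ is an open subgroup containing $H$ but not $g$; ranging over all such $g$ shows $H=\bigcap\{U : U \text{ open}, H\le U\}$, as required.
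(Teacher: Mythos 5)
Your proof is correct: the paper itself gives no argument for this proposition, simply citing \cite[Lemma 2.1.2 and Proposition 2.1.4(d)]{RZ}, and your argument is essentially the standard one found there, using the basis of open normal finite-index subgroups $K$ (kernels of the projections $\widehat G\to G/N_i$), coset decompositions for part (i), and the open subgroups $HK$ to separate a closed $H$ from any $g\notin H$ in part (ii). No gaps; the observation that $gK\cap H=\emptyset$ is equivalent to $g\notin HK$, with $HK$ a subgroup by normality of $K$, is exactly the key point.
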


	Profinite completions contain all information about finite quotients of a group in the following sense.
	\begin{prop}[\cite{RZ}, Corollary 3.2.8]
		Let $G_1$ and $G_2$ be finitely generated abstract groups. Then $\widehat{G}_1$ and $\widehat{G}_2$ are	isomorphic as topological groups if and only if the sets of (isomorphism types of) finite quotients of $G_1$ and $G_2$ coincide.
	\end{prop}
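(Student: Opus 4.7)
The plan is to prove the biconditional by handling each direction separately.

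For the ``only if'' direction, I will establish that for any finitely generated abstract group $G$, the isomorphism classes of finite abstract quotients of $G$ are in natural bijection with the isomorphism classes of finite continuous quotients of $\widehat{G}$. One direction of this bijection uses the universal property of the profinite completion: any surjection $G \twoheadrightarrow F$ with $F$ finite factors uniquely through $\widehat{G}$, giving a continuous surjection $\widehat{G} \twoheadrightarrow F$. The reverse direction uses the density of the natural map $G \to \widehat{G}$ together with the discreteness of $F$: a continuous surjection $\widehat{G} \twoheadrightarrow F$ restricts along $G \to \widehat{G}$ to a map whose image is dense in $F$, hence equals $F$. Granted this bijection, any topological isomorphism $\widehat{G}_1 \cong \widehat{G}_2$ carries continuous finite quotients to continuous finite quotients, so the sets of finite-quotient isomorphism classes of $G_1$ and $G_2$ coincide.

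For the ``if'' direction, I will build a topological isomorphism assuming agreement of finite-quotient isomorphism classes. For each $n \in \N$, define the characteristic finite-index normal subgroup $K_n(G_i) = \bigcap\{N \nsgp G_i : [G_i : N] \leq n\}$; finite generation of $G_i$ implies there are only finitely many such $N$ (each is the kernel of one of the finitely many homomorphisms from $G_i$ into the symmetric group on $n$ letters), so $K_n(G_i)$ has finite index in $G_i$. The chain $\{K_n(G_i)\}_n$ is cofinal among finite-index normal subgroups, so $\widehat{G}_i \cong \invlim_n G_i/K_n(G_i)$. My goal is to produce compatible isomorphisms $G_1/K_n(G_1) \xrightarrow{\sim} G_2/K_n(G_2)$. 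The finite group $G_i/K_n(G_i)$ embeds as a subdirect product of the finite quotients of $G_i$ of order at most $n$, and I will argue it is characterized up to isomorphism by the set of such quotients.

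The main obstacle lies precisely in this last claim: that $G_i/K_n(G_i)$ depends only on the set of isomorphism classes of finite quotients of order at most $n$, not on the multiplicities with which each appears (i.e.\ the number of normal subgroups producing each class). The standard resolution is a M\"obius inversion argument relating $|\Hom(G_i,F)|$ to $|\mathrm{Epi}(G_i,F)|$, via which one shows that sameness of isomorphism types forces the full numerical data (and hence the subdirect product structure) to agree. Once isomorphisms at each level $n$ are in place, a K\"onig/compactness diagonal argument, using finiteness of the set of isomorphisms $G_1/K_n(G_1) \to G_2/K_n(G_2)$ at each level, extracts a compatible system, yielding the desired topological isomorphism of inverse limits.
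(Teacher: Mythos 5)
Your ``only if'' direction is fine, and the skeleton of your ``if'' direction (the characteristic subgroups $K_n$, their cofinality, level-wise isomorphisms of the finite quotients $G_i/K_n(G_i)$, then a compactness argument over the finite nonempty sets of isomorphisms) is exactly the skeleton of the standard proof of \cite[Corollary 3.2.8]{RZ}, which the paper cites without reproving. However, the step you yourself flag as the main obstacle is genuinely gapped, and the fix you propose does not work. M\"obius inversion over the subgroup lattice of a finite group $F$ converts the numbers $|\Hom(G,H)|$ for $H\leq F$ into $|\mathrm{Epi}(G,F)|$; it is a purely quantitative tool, whereas your hypothesis is only qualitative (it tells you \emph{which} finite groups occur as quotients, not with what multiplicities, nor how many homomorphisms there are). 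No inversion argument can manufacture the multiplicities from the set of isomorphism types; in fact the statement that the multiplicities agree is a \emph{corollary} of the proposition being proved (count open normal subgroups of $\widehat G_i$), not an available lemma, so as written your plan is circular at its crucial point. A further inaccuracy: $G/K_n(G)$ is \emph{not} determined by the isomorphism types of quotients of order at most $n$. For $n=2$, both $\Z$ and $\Z^2$ have quotient set $\{1,\Z/2\Z\}$ in orders at most $2$, yet $\Z/K_2(\Z)\iso \Z/2\Z$ while $\Z^2/K_2(\Z^2)\iso(\Z/2\Z)^2$; note also that $|G/K_n(G)|$ can far exceed $n$, so the full hypothesis (all finite quotients) is needed at each level.

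The correct key step avoids multiplicities entirely. Since $G_1/K_n(G_1)$ is a finite quotient of $G_1$, the hypothesis gives an epimorphism $\theta\colon G_2\twoheadrightarrow G_1/K_n(G_1)$. The normal subgroups of index at most $n$ in $G_1/K_n(G_1)$ intersect trivially, and their preimages under $\theta$ are normal subgroups of $G_2$ of index at most $n$ whose intersection is $\ker\theta$; hence $K_n(G_2)\leq\ker\theta$ and $G_1/K_n(G_1)$ is a quotient of $G_2/K_n(G_2)$. By symmetry each of these two finite groups is a quotient of the other, so they have equal order and are isomorphic. With this lemma in place, your concluding inverse-limit (K\"onig) argument goes through, provided you also record why an isomorphism $G_1/K_{n+1}(G_1)\to G_2/K_{n+1}(G_2)$ induces one at level $n$: the subgroup $K_n(G_i)/K_{n+1}(G_i)$ is the intersection of all normal subgroups of index at most $n$ in $G_i/K_{n+1}(G_i)$, hence is preserved by any isomorphism, so the restriction maps between the finite sets of level-$n$ isomorphisms are well defined.
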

	
\begin{prop}[Proposition 3.2.2 of \cite{RZ}]\label{BasicCorr}
		If $G$ is a finitely generated residually finite group, then there is a one-to-one
		correspondence between the set $\mathcal{X}$ of subgroups of $G$ with finite index, and the set $\mathcal{Y}$ of all open subgroups of $\widehat{G}$. Identifying $G$ with its image in the completion, this correspondence is given by:
		\begin{itemize}
			\setlength{\itemsep}{0 pt}
			\item For $H \in \mathcal{X}$, $H \mapsto \overline{H}$.
			\item For $Y \in \mathcal{Y}$, $Y \mapsto Y \cap G$.
		\end{itemize}
		Moreover, 
		\begin{itemize}
			\item If $H,K \in \mathcal{X}$ and $K \leq H$ then $[H : K] = [\overline{H} : \overline{K}]$. 
			\item For $H\in\cal X$, $H$ is normal if and only if $\overline H$ is normal.
			\item For $H\in\cal X$, $\overline H$ is isomorphic to $\widehat H$.
		\end{itemize}
		
	\end{prop}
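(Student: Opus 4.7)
The plan is to use two facts: that (the image of) $G$ is dense in $\widehat G$, which is immediate from the inverse limit construction of $\widehat G$; and the characterization, from the preceding proposition, that the open subgroups of $\widehat G$ are precisely the closed subgroups of finite index. Together these let me transfer finite-index subgroup data between $G$ and $\widehat G$ in a controlled way.

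First I would verify both maps are well-defined. Given $H \le G$ of finite index, its normal core $N$ still has finite index and is therefore a member of the inverse system $\mathcal N$; the preimage of $H/N$ under the natural continuous surjection $\widehat G \to G/N$ is then a closed subgroup of finite index, hence open, and a short density argument identifies it with $\overline H$. Conversely, for an open subgroup $Y\le\widehat G$, the intersection $Y\cap G$ has finite index because its cosets inject into the finite set $\widehat G/Y$. Next I would check the two maps are mutually inverse: the equality $\overline H\cap G=H$ holds because $H$ is already closed in the profinite topology on $G$ (its complement being a union of $N$-cosets), while $\overline{Y\cap G}=Y$ holds since $Y$ is closed and $Y\cap G$ is dense in $Y$, the latter following from density of $G$ in $\widehat G$ applied inside each coset of $Y$. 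Index preservation $[H:K]=[\overline H:\overline K]$ then follows from the induced bijection between $H/K$ and $\overline H/\overline K$, and normality transfers in both directions automatically: the closure of a normal subgroup of a topological group is normal, and the preimage of a closed normal subgroup under a continuous homomorphism is normal.

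The main obstacle is the last claim that $\overline H\cong \widehat H$. The inclusion $H\hookrightarrow\widehat G$ induces a canonical continuous homomorphism $\widehat H\to\overline H$ with dense image, so it suffices to prove that the family $\{N\cap H : N\nsgp G\text{ of finite index}\}$ is cofinal in the directed set of finite-index normal subgroups of $H$. Given $H_0\nsgp H$ of finite index, note that $H_0$ has finite index in $G$ as well, so it has only finitely many $G$-conjugates, and their intersection $N$ is a finite-index normal subgroup of $G$ contained in $H_0$. Then $N\cap H\le N\le H_0$, establishing cofinality, and so the two inverse limits defining $\widehat H$ and $\overline H$ coincide.
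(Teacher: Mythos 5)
Your proof is correct. Note that the paper offers no argument of its own for this statement---it is quoted directly from Ribes--Zalesskii \cite{RZ}---and what you have written is essentially the standard argument from that source: the two maps are checked to be mutually inverse using density of $G$ in $\widehat G$ together with the fact that open subgroups are exactly the closed finite-index ones, and the final claim $\overline H \iso \widehat H$ is reduced to cofinality of the family $\{N\cap H : N\nsgp G \text{ of finite index}\}$ among the finite-index normal subgroups of $H$, which you establish correctly via the normal core of $H_0$ in $G$. Two small steps are left implicit but are standard and easy to supply: first, to conclude from cofinality you need the identification $\overline H \iso \invlim_N HN/N$ (equivalently, surjectivity of the canonical map $\widehat H\to\overline H$, which follows from compactness: the image is compact, hence closed, and dense); second, in the normality claim the fact you actually need is that the closure of a subgroup normalized by the \emph{dense} subgroup $G$ is normal in all of $\widehat G$ (or, more directly, that $\overline H=\ker(\widehat G\to G/H)$ when $H\nsgp G$ has finite index), rather than the literal statement about a subgroup normal in the ambient topological group. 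Neither point is a genuine gap.
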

	
It is elementary that a finitely generated virtually abelian group is residually finite. 	In particular, crystallographic groups are residually finite. The profinite completion of a residually finite group is abelian if and only if the group itself is abelian. Hence Proposition \ref{BasicCorr} and Corollary \ref{prop:uniqueMAsubgp} together imply the following proposition.

\begin{prop}\label{UniqueOpenMASubgp}
Let $\Gamma$ be a crystallographic group with translation group $M$. The only open normal torsion-free maximal abelian subgroup of $\widehat\Gamma$ is $\widehat M$.
\end{prop}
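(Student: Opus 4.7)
The plan is to use Proposition \ref{BasicCorr} to translate between open normal subgroups of $\widehat\Gamma$ and finite-index normal subgroups of $\Gamma$, transfer the remaining properties (torsion-freeness, abelianness, maximal abelianness) across this correspondence, and then invoke Corollary \ref{prop:uniqueMAsubgp}.

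First I would verify that $\widehat M$ itself satisfies the claimed properties. Openness and normality of $\widehat M = \overline M$ follow directly from Proposition \ref{BasicCorr}, and torsion-freeness follows from $\widehat M \iso \widehat\Z^d$. For maximal abelianness I would compute $C_{\widehat\Gamma}(\widehat M)$: any element centralizing $\widehat M$ projects to an element of $\widehat\Gamma/\widehat M \iso G$ that acts trivially on $\widehat M$. But $G$ acts faithfully on $M$ via its inclusion into $\GL{d}{\Z}$, and this action extends to a still-faithful action on $\widehat M$ through $\GL{d}{\Z}\hookrightarrow\GL{d}{\widehat\Z}$, so this image must be trivial. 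Hence $C_{\widehat\Gamma}(\widehat M) = \widehat M$, which forces every abelian subgroup containing $\widehat M$ to equal $\widehat M$.

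Conversely, let $U\le\widehat\Gamma$ be open, normal, torsion-free, and maximal abelian. Openness and normality of $U$, together with Proposition \ref{BasicCorr}, give $U=\overline H$ where $H=U\cap\Gamma$ is finite-index and normal in $\Gamma$ and $\overline H \iso \widehat H$. Torsion-freeness of $U$ passes immediately to its subgroup $H$, and abelianness of $H$ follows from abelianness of $\widehat H \iso U$ as noted in the paragraph before the proposition. For maximal abelianness of $H$ in $\Gamma$, a continuity argument suffices: the centralizer $C_{\widehat\Gamma}(g)$ of any $g\in\Gamma$ is a closed subgroup of $\widehat\Gamma$, so if some $g\in\Gamma\smallsetminus H$ centralized all of $H$ it would also centralize $\overline H = U$, contradicting maximality of $U$. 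Thus $H$ is a finite-index normal torsion-free maximal abelian subgroup of $\Gamma$, so Corollary \ref{prop:uniqueMAsubgp} gives $H=M$ and $U=\widehat M$.

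The step requiring genuine input beyond the formal correspondence is checking that $\widehat M$ itself is maximal abelian; the essential ingredient there is that the faithful $\Z$-linear action of $G$ on $M$ remains faithful after base change to $\widehat\Z$.
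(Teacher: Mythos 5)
Your proof is correct and follows essentially the same route the paper intends: the correspondence of Proposition \ref{BasicCorr} transfers the finite-index/normal/torsion-free/abelian data between $\Gamma$ and $\widehat\Gamma$, and Corollary \ref{prop:uniqueMAsubgp} then pins down $M$; the paper merely leaves implicit the details you spell out (maximal abelianness of $\widehat M$ via faithfulness of the $G$-action on $\widehat\Z^d$, and the closed-centralizer argument transferring maximality from $U$ to $U\cap\Gamma$).
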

	
	\section{Profinite properties of crystallographic groups}

Let $\Gamma_1$ and $\Gamma_2$ be crystallographic groups of dimension $d$, with translation groups $M_1$ and $M_2$ and point groups $G_1$ and $G_2$ respectively. Suppose $\widehat \Gamma_1\iso \widehat\Gamma_2$. By Proposition \ref{UniqueOpenMASubgp} there must be a short exact sequence of isomorphisms
		\begin{equation}\label{eqnIsoOfSES}
		 \begin{tikzcd}
	0 \ar{r} & \widehat M_1 \ar{r} \ar{d}{\phi}[swap]{\iso}& \widehat \Gamma_1 \ar{r}\ar{d}{\Phi}[swap]{\iso} & G_1 \ar{r} \ar{d}{\psi}[swap]{\iso}& 0 \\
	0 \ar{r} & \widehat M_2 \ar{r} &\widehat \Gamma_2 \ar{r} & G_2 \ar{r} & 0 
	\end{tikzcd}
\end{equation}		
The $G_1$- and $G_2$-modules $\widehat M_1$ and $\widehat M_2$ are thus isomorphic modules in the sense that
\[\phi(g\cdot m) = \psi(g)\cdot \phi(m) \]
for all $g\in G_1, m\in \widehat M_1$.

 Given a $\Z$-basis for each $M_i$ (and hence a $\widehat\Z$-basis for $\widehat M_i$), so that the $G_i$ are realised as subgroups of \GL{d}{\Z}, this isomorphism states that $G_1$ and $G_2$ are conjugate as subgroups of the group \GL{d}{\widehat\Z} of continuous automorphisms of $\widehat \Z{}^d$.	
	
Hence a first key question to be answered	when deciding whether crystallographic groups are distinguished by their profinite completions is the following.
\begin{question}
Let $G_1$ and $G_2$ be finite subgroups of \GL{d}{\Z} which arise as crystallographic point groups. If $G_1$ is conjugate to $G_2$ in \GL{d}{\widehat\Z}, must $G_1$ be conjugate to $G_2$ in \GL{d}{\Z}?
\end{question}

As with Question \ref{mainquestion}, the general answer to this question is `no': the same citation \cite{FNP80} provides an example in dimension 22. We will continue to focus on our computational approach to low dimensions.
	
For the following discussion suppose that the answer to this question is affirmative in dimensions at most 4---so that \cgps\ with the same profinite completion are necessarily in the same \Z-class. This assumption is addressed in Section \ref{sec:ConjZHat} below. We classify the profinite isomorphism types of space groups in a given arithmetic class in terms of an action on cohomology; this analysis is almost identical to the discrete case, but we nevertheless include it for completeness.
	
	Let $\Gamma_1$ and $\Gamma_2$ be space groups in the same arithmetic class, so that both may be considered to be extensions of a given finite group $G$ by a given $G$-module $M\iso \Z^d$. Suppose there is a continuous isomorphism $\Phi\colon \widehat \Gamma_1 \to \widehat \Gamma_2$. By Proposition \ref{UniqueOpenMASubgp}, the subgroup $\overline M = \widehat M$ of $\widehat \Gamma_i$ is the unique maximal abelian open normal subgroup of $\widehat \Gamma_i$, so that $\Phi$ induces an isomorphism of short exact sequences as in \eqref{eqnIsoOfSES}, with $\widehat{M}=\widehat{M}_1=\widehat{M}_2$.
	
	To describe these groups as cohomology classes in $H^2(G, \widehat M)$ represented by particular cocycles $G^2\to \widehat M$, choose a section $\sigma_1 \colon G \to \widehat \Gamma_1$. Define also a section $\sigma_2 \colon G \to \widehat \Gamma_2$ by 
	\[ \sigma_2 = \Phi\circ \sigma_1 \circ \psi^{-1} \]
	The action of $G$ on $\widehat M$ is induced by conjugation in the group $\widehat \Gamma_i$ via the formula
	\[g\cdot m = \sigma_i(g) m \sigma_i(g)^{-1} \] 
	By Theorem~\ref{thm:1stBieberbach}, this action is faithful, so the natural map $G\to \Aut(M) \subseteq \Aut(\widehat M)$ is injective, and we may consider $G$ to be a subgroup of $\Aut \widehat M$. We note that $\phi$ is an element of ${\cal N}_{\Aut \widehat M}(G)$, conjugation by which induces the map $\psi$, for:
	\begin{eqnarray*}
		\psi(g) \cdot m & = & (\sigma_2\psi(g)) m (\sigma_2\psi(g))^{-1} \\
		& = & \left( \Phi\sigma_1\psi^{-1} \psi (g) \right) m \left( \Phi\sigma_1\psi^{-1} \psi (g)\right)^{-1}\\
		& = & \Phi\big( \sigma_1(g) \Phi^{-1} m \sigma_1(g)^{-1} \big) \\
		& = & \phi ( g\cdot \phi^{-1}(m))  
	\end{eqnarray*}
	whence $\psi(g) = \phi g \phi^{-1}$ in $\Aut\widehat M$. 
	
	At the level of cocycles, $\widehat{\Gamma}_i$ is represented by a 2-cocycle $\zeta_i$ given by
	\[ \zeta_i (g_1, g_2) = \sigma_i(g_1g_2)(\sigma_i(g_1)\sigma_i(g_2))^{-1}\]
	Using the definition of $\sigma_2$, we now find that 
	\begin{equation}\label{ActionOfNormOnCocycles}
	\zeta_2(g_1, g_2) = \phi(\zeta_1(\phi^{-1}g_1\phi, \phi^{-1}g_2\phi))
	\end{equation}
	
This formula defines the standard action of ${\cal N}_{\Aut\widehat M}(G)$ on $H^2(G,\widehat M)$, so $\zeta_1$ and $\zeta_2$ are in the same orbit of this action.

	Conversely, given two extensions $\widehat \Gamma_1$ and $\widehat \Gamma_2$ of $G$ by $\widehat M$, with sections $\sigma_1$ and $\sigma_2$ yielding representative cocycles $\zeta_1$ and $\zeta_2$, if we have some $\phi\in{\cal N}_{\Aut\widehat M}(G)$ such that the relation \eqref{ActionOfNormOnCocycles} holds, it may be readily verified that the map $\Phi\colon \widehat \Gamma_1 \to \widehat \Gamma_2$ defined by 
	\[\Phi(m\sigma_1(g)) = \phi(m)\cdot \sigma_2(\phi g \phi^{-1}) \]
	is a continuous isomorphism.
	
	This concludes the proof of the following statement.
	\begin{prop}
		Let $(G, M)$ be an arithmetic class of space groups. The isomorphism types of profinite completions of space groups in this class are in bijection with the orbits of $H^2(G, \widehat M)$ under the action of ${\cal N}_{\Aut\widehat M}(G)$.
	\end{prop}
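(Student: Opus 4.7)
The plan is to establish bijectivity by exhibiting mutually inverse constructions in each direction, passing through the classification of extensions by $H^2(G,\widehat M)$.

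For the forward direction, I would begin with a continuous isomorphism $\Phi\colon\widehat\Gamma_1\to\widehat\Gamma_2$ between the profinite completions of two space groups $\Gamma_1,\Gamma_2$ in the class $(G,M)$. By Proposition \ref{UniqueOpenMASubgp} the unique open normal torsion-free maximal abelian subgroup on each side is $\widehat M$, so $\Phi$ restricts to an automorphism $\phi$ of $\widehat M$ and descends to an automorphism $\psi$ of $G$, producing the commutative diagram \eqref{eqnIsoOfSES}. I would then choose a section $\sigma_1\colon G\to\widehat\Gamma_1$, define $\sigma_2=\Phi\circ\sigma_1\circ\psi^{-1}$, and read off the cocycles $\zeta_i$ that represent the extension classes. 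Using faithfulness of the $G$-action on $\widehat M$ (inherited via the First Bieberbach Theorem and the maximal abelian property), a direct computation shows that conjugation by $\phi$ in $\Aut\widehat M$ sends $G$ to itself and implements $\psi$, so $\phi\in{\cal N}_{\Aut\widehat M}(G)$; substituting into the definition of $\zeta_2$ then yields the transformation law \eqref{ActionOfNormOnCocycles}, which is precisely the standard normaliser action on $H^2(G,\widehat M)$, placing $[\zeta_1]$ and $[\zeta_2]$ in a common orbit.

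For the converse, suppose $\phi\in{\cal N}_{\Aut\widehat M}(G)$ and cocycles $\zeta_1,\zeta_2$ (arising from chosen sections $\sigma_1,\sigma_2$) are related by \eqref{ActionOfNormOnCocycles}. I would construct an explicit candidate isomorphism $\Phi\colon \widehat\Gamma_1\to\widehat\Gamma_2$ by
\[\Phi(m\sigma_1(g))=\phi(m)\cdot\sigma_2(\phi g\phi^{-1})\]
on the set-theoretic decomposition $\widehat\Gamma_1=\widehat M\cdot\sigma_1(G)$. Continuity is automatic and bijectivity is immediate from invertibility of $\phi$; the content is verifying that $\Phi$ preserves multiplication, which amounts to expanding both sides of $(m\sigma_1(g))(n\sigma_1(h))$ using the cocycle-twisted product $\sigma_1(g)\sigma_1(h)=\zeta_1(g,h)^{-1}\sigma_1(gh)$ and its analogue in $\widehat\Gamma_2$, then invoking \eqref{ActionOfNormOnCocycles} and the intertwining relation $\phi(g\cdot m)=(\phi g\phi^{-1})\cdot\phi(m)$. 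Standard cohomological bookkeeping (replacing $\sigma_1$ by a cohomologous section, or $\zeta_2$ by another representative of its orbit) shows the construction descends to isomorphism classes on one side and orbits of cohomology classes on the other.

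The main obstacle I anticipate is the multiplicative check in the converse direction: it is routine rather than deep, but requires the cocycle identity, the normaliser condition on $\phi$, and the $\widehat\Z G$-module structure on $\widehat M$ to combine in just the right way, so it warrants careful execution. Once that is in hand, the two constructions are transparently mutually inverse, yielding the claimed bijection.
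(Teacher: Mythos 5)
Your proposal is correct and follows essentially the same route as the paper: the forward direction via the unique maximal abelian open normal subgroup, the section $\sigma_2=\Phi\circ\sigma_1\circ\psi^{-1}$, and the verification that $\phi$ normalises $G$ and induces the transformation law \eqref{ActionOfNormOnCocycles}; and the converse via the explicit map $\Phi(m\sigma_1(g))=\phi(m)\cdot\sigma_2(\phi g\phi^{-1})$. The only difference is that you spell out the multiplicative check and the well-definedness bookkeeping, which the paper leaves as a routine verification.
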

	\section{Computational considerations}
	
	The previous section gave us a two-part formulation of our task to decide whether space groups in dimension $d$ have distinct profinite completions:
	\begin{itemize}
		\item to decide whether distinct conjugacy classes of finite subgroups of \GL{d}{\Z} may become conjugate in \GL{d}{\widehat \Z}; and
		\item to compute the action of a normalizer of a finite group $G$ in \GL{d}{\widehat\Z} on the cohomology group $H^2(G, \widehat\Z {}^d)$. 
	\end{itemize} 
	In this section we describe the methods for doing this in practice, and which allowed us to answer Question \ref{mainquestion} in the affirmative in dimensions 2, 3 and 4.
	
	\subsection{The \texttt{GAP} package \texttt{CrystCat}}
	We make use of the program \texttt{GAP} \cite{GAP4}, equipped with the package \texttt{CrystCat} \cite{CrystCat} to enable our computations. Among the data provided by this package, of which we make use, are:
	\begin{itemize}
		\item A list of \Q-classes of crystallographic point groups in dimensions $d=2$, 3 or 4;
		\item for each \Q-class, a list of the \Z-classes contained within it, and a representative finite subgroup of \GL{d}{\Z} for each \Z-class;
		\item finite presentations for the point group of a \Z-class, consistent across all \Z-classes in a given \Q-class;
		\item the number of space group types in a given \Z-class.
	\end{itemize}
	
\subsection{Conjugacy problems in $\widehat{\Z}$}\label{sec:ConjZHat}
	
	Of key importance in the present paper is the ability to decide conjugacy problems over the profinite integers $\widehat \Z$ or the $p$-adic integers \Z[p]. Let ${\cal A} = \{ A_1,\ldots, A_k\}$ and ${\cal B} = \{B_1, \ldots, B_k\}$ be tuples of $d\times d$ integral matrices. Let $\pi$ be some finite set of primes and let \Z[\pi] denote the ring \[ \Z[\pi] = \prod_{p\in \pi} \Z[p]. \]We wish to decide computationally whether there exists some $X \in \GL{d}{\Z[\pi]}$ such that $X^{-1}A_i X = B_i$ for all $i$. By \cite[Remark 4.1.7]{HP89} it in fact suffices to consider $\pi$ to be a subset of those primes dividing $|G|$; for other primes $p\nmid |G|$ one automatically has conjugacy over \Z[p]. This may profitably be rephrased as a linear problem: let $T_{\cal A, B}$ be the $kd^2 \times d^2$ integer matrix  corresponding to the map
	\[T_{\cal A, B}\colon \Z[\pi]^{d^2} \to (\Z[\pi]^{d^2}){}^k, \quad  X \mapsto (A_iX - XB_i)_{i \in I}\]
	where we implicitly identify the set of $m\times n$ matrices over a ring $R$ with $R^{mn}$. We now have to consider the linear condition $T_{\cal A, B}(X) = 0$, together with the non-linear condition that $\det(X) \in \Z[\pi]^{\!\times}$. We break this into two stages: a solution modulo $q = \prod\{p \in \pi\}$ and the question of whether this solution may be `lifted' to a solution over \Z[\pi]. 
	
	The business of seeking a solution modulo $q$ is a finite question: we simply solve the linear equation $A_iX - XB_i=0$ over each field $\F_p$, and discard those solutions which have determinant zero. If no solutions remain for some $p$, then we are done: the tuples $\cal A$ and $\cal B$ are not conjugate in $\GL{d}{\Z[\pi]}$. Otherwise one may use the Chinese Remainder Theorem to produce a list of the elements of $\GL{d}{\Z/q\Z}$ which  conjugate $\cal A$ to $\cal B$. We wish to discover whether any of these elements is the modulo $q$ reduction of an element of $\GL{d}{\Z[\pi]}$ which conjugates $\cal A$ to $\cal B$.
	
	Now suppose $\overline X_0$ is some matrix in $\GL{d}{\Z/q\Z}$ which conjugates $\cal A$ to $\cal B$ modulo $q$. We may take some integer matrix $X_0$ which maps to $\overline X_0$ on reduction modulo $q$. Then there exist integer matrices $E_i$ such that 
	\[ A_i X_0 - X_0 B_i = qE_i\]
	The elements of $\GL{d}{\Z[\pi]}$ which are equivalent to $X_0$ modulo $q$ are those of the form $X_0 + qY$ where $Y$ is any $d\times d$ matrix over \Z[\pi]. All these matrices have determinant coprime to $q$, so are indeed invertible over \Z[\pi]. Such a matrix conjugates $\cal A$ to $\cal B$ if and only if 
	\[0 =  A_i (X_0+qY) - (X_0+qY) B_i = q(E_i+ A_i Y - YB_i)\]
	for each $i$. We must therefore decide whether $(E_i)_{i\in I}$ is contained in the image of $T_{\cal A, B}$ over \Z[\pi]. 
	
	Let $M$ be an $n\times m$ integer matrix and $\pi$ a set of primes. We ask which integer points $x\in \Z^{m}$ are in the image of the map $T_M\colon \Z[\pi]^n\to \Z[\pi]^m$ specified by $M$ and solve the question in the following manner. Let $S$ be the Smith normal form of $M$ over $\Z$, and let $P$ and $Q$ be the invertible integer matrices such that $M = PSQ$. Since $P$ and $Q$ are invertible over $\Z$, it follows that $x\in \Z^{m}$ is in the image of  $T_M$ if and only if $P^{-1}(x)$ is in the image of the map $T_S \colon \Z[\pi]^n\to \Z[\pi]^m $ specified by $S$. The Smith normal form $S$ has no off-diagonal elements; let the non-zero on-diagonal entries be $d_1,\ldots, d_r$. Then the image of the map $T_S$ is plainly a sum 
	\[ d_1 \Z[\pi] \oplus \cdots \oplus d_r\Z[\pi] \]
	whose integral points are 
	\[ \big(d_1 \Z[\pi] \cap \Z\big)\oplus \cdots \oplus \big(d_r\Z[\pi]\cap \Z\big) \]
	Write each $d_i$ as a product $\bar d_i \cdot e_i$, where $\bar d_i$ is divisible only by primes from $\pi$ and $e_i$ is not divisible by any element of $\pi$. Then $d_i \Z[\pi] \cap \Z = \bar d_i \Z$. Form therefore a new matrix $\bar S$ by replacing each non-zero entry $d_i$ of $S$ with $\bar d_i$. The integral points of the image of $T_M$ are now exactly the image over $\Z$ of the integral matrix $P\bar S$, which may be computed without difficulty.
	
	Being thus in possession of a method for deciding conjugacy problems in \GL{d}{\Z[\pi]}, we make a few remarks on its implementation in the present case to show that particular finite subgroups of \GL{d}{\Z} are not conjugate in \GL{d}{\widehat \Z}. 
	
First observe that conjugacy in \GL{d}{\Q} is a necessary condition for conjugacy over $\widehat \Z$---or indeed over any \Z[p].	Suppose that the tuples of integer matrices $\cal A$ and $\cal B$ are conjugate in \GL{d}{\Q[p]}. Consider the kernel of the map 
	\[T_{\cal A, B}\colon \Q[p]^{d^2} \to (\Q[p]^{d^2}){}^k, \quad  X \mapsto (A_iX - XB_i)_{i \in I}\]
This map is specified by an integer matrix (with respect to the standard bases of the given vector spaces). It follows that the intersection $\Q^{d^2}\cap \ker T_{\cal A, B}$ is dense in the whole kernel.  This kernel has, by assumption, non-trivial intesection with the set of matrices with non-zero determinant---which is an open subset of $\Q[p]^{d^2}$. Therefore there is some element of $\Q^{d^2}\cap \ker T_{\cal A, B}$ with non-zero determinant. This element exhibits that $\cal A$ and $\cal B$ are conjugate in \GL{d}{\Q}.

	Let $G_1$ and $G_2$ be two finite subgroups of \GL{d}{\Z}, which are conjugate in \GL{d}{\Q}. Let us suppose further that $G_1$ and $G_2$ are equipped with generating sets $X_1$ and $X_2$, such that some isomorphism of $G_1$ with $G_2$ carries $X_1$ to $X_2$. We remark that this is precisely the situation arising in our analysis of point groups given by the catalogue \texttt{CrystCat}: two point groups corresponding to two \Z-classes contained in the same \Q-class come equipped with generating sets of the type described above.
	
	Any isomorphism $G_1 \to G_2$ must then take $X_1$ to $\sigma(X_2)$ for some  $\sigma \in \Aut(G_2)$. Hence to decide whether $G_1$ and $G_2$ are conjugate in \GL{d}{\widehat\Z} one should check for each $\sigma$ whether the tuples $X_1$ and $\sigma(X_2)$ are conjugate in each \GL{d}{\Z[p]}. We do this for some suitable finite set of primes $p\in\pi$ by applying the above machinations to the tuples ${\cal A} = X_1$  and ${\cal B} = \sigma(X_2)$ for each $\sigma$.

In our code, to minimize the computational burden, we adopted the following sequential approach to attempt to show that the point groups $G_1$ and $G_2$ (corresponding to \Z-classes in the same \Q-class) are not conjugate over $\widehat\Z$:
	\begin{enumerate}
		\item First compute the cohomology groups \[H^1(G_i, \Z^d/ |G_i|\Z^d)\quad \text{and}\quad H^2(G_i, \Z^d/ |G_i|\Z^d)\] (where the coefficient groups are given the action concomitant with the representation of $G_1$ and $G_2$ as subgroups of \GL{d}{\Z}). This is a fast procedure, and a difference in the respective cohomology groups of $G_1$ and $G_2$ shows that these groups are not conjugate in \GL{d}{\Z / |G_1|\Z} and hence not in \GL{d}{\widehat\Z}\footnote{In fact conjugacy in \GL{d}{\widehat\Z} also establishes isomorphism of the $\Z$-cohomology groups---see \cite[Lemma 4.1.9]{HP89}.}. In order to dodge potentially lengthy and unenlightening checks for isomorphisms, we actually only compare the sizes of the homology groups in the code.
		\item Decide conjugacy of $G_1$ and $G_2$ in \GL{d}{\Z[2]}.
		\item Decide conjugacy of $G_1$ and $G_2$ in \GL{d}{\Z[3]}.
		\item Decide conjugacy of $G_1$ and $G_2$ in \GL{d}{\Z[5]}.
	\end{enumerate}
	
	The number of pairs of \Z-classes whose point groups are shown not to be conjugate at each stage of the process are shown in Table \ref{ConjugacyFigures}.
	\begin{table}
		\centering
		Dimension 3:
		
		\begin{tabular}{|l|r|}
			\hline 
			Test applied  & \Z-class pairs separated \\ \hline \hline
			Size of coholomology     &         51 \\ \hline 
			Conjugacy modulo 2      &         1 \\ \hline 
			Conjugacy over \Z[2]    &         7 \\ \hline 
			Conjugacy modulo 3      &         5 \\ \hline
			Conjugacy over \Z[3]    &         0  \\ \hline 
			Conjugacy modulo 5      &         0 \\ \hline 
			Conjugacy over \Z[5]    &         0 \\ \hline \hline
			Total pairs separated   &  64 \\\hline
			Total pairs to be tested      &  64 \\ \hline
		\end{tabular}
		\vspace{1cm}
		
		Dimension 4:
		
		\begin{tabular}{|l|r|}
			\hline 
			Test applied  & \Z-class pairs separated \\ \hline \hline
			Size of coholomology     &         1105 \\ \hline 
			Conjugacy modulo 2      &         101 \\ \hline 
			Conjugacy over \Z[2]    &         137 \\ \hline 
			Conjugacy modulo 3      &         78 \\ \hline 
			Conjugacy over \Z[3]    &         0  \\ \hline 
			Conjugacy modulo 5      &         7 \\ \hline 
			Conjugacy over \Z[5]    &         0 \\ \hline \hline
			Total pairs separated   &  1428 \\\hline
			Total pairs to be tested      &  1429 \\ \hline
		\end{tabular}
		\caption{Results of tests establishing when pairs of point groups of \Z-classes (each contained in a particular \Q-class) are not conjugate over the profinite integers $\widehat{\Z}$. Note that these tests are sequential: of course all 78 cases which are not conjugate over $\Z/3\Z$ are {\it a fortiori} not conjugate over \Z[3], but have already been discarded before the test for conjugacy over \Z[3] is commenced. In dimension 2, cohomology computations alone suffice, so we have not devoted space to a table.}\label{ConjugacyFigures}
	\end{table}

	This proves sufficient to show that no two \Z-classes have point groups which are conjugate in \GL{d}{\widehat \Z}, with one exception: the two \Z-classes in the 4-dimensional \Q-class listed in the catalogue as (4, 20, 11). In this case the point groups were given an {\it ad hoc} further test and were found to not be conjugate over $\Z[2]\times \Z[3] = \Z[\{2,3\}]$.
	
	\begin{rmk} There is no contradiction here with the fact that these groups are found to be conjugate over \Z[2] and \Z[3] {\em separately}; it simply shows that the set of those $\sigma\in\Aut(G_2)$ such that $X_1$ is conjugate to $\sigma(X_2)$ over \Z[2] is disjoint from the set of those $\sigma$ allowing conjugacy over \Z[3]. 
	\end{rmk}
	\begin{rmk}
		One could ask, what if the finite groups {\em were} in fact conjugate in \GL{d}{\widehat \Z}? How could this be readily shown, without checking infinitely many primes? For a given pair of tuples $\cal A$ and $\cal B$ which are conjugate in \GL{d}{\widehat \Z}, one can establish this conjugacy in the following manner. 
		
		Compute the Smith normal form of the matrix $T_{\cal A, B}$ defined above and let $\pi$ be the set of those primes dividing some non-zero entry of the Smith normal form. Let $q=\prod_{p\in \pi}p$. The effect of this choice is that the matrix $\overline S$ from the above discussion is equal to $S$. Then the process of lifting a modulo $q$ conjugacy to a conjugacy over $\Z[\pi]$ now results in an {\em integer} matrix exhibiting this conjugacy. This integer matrix is an element of \GL{d}{\Z[\pi]}, and thus has non-zero determinant $D$. It therefore establishes conjugacy over \Z[p] for all primes not dividing $D$, leaving only finitely many primes $p$ for which conjugacy over \Z[p] remains to be checked.
	\end{rmk}
	
	\subsection{The action of ${\cal N}_{\Aut M}(G)$ on $H^2(G, \widehat M)$}\label{sec:ActionOfNorm}
	
	Let $G$ be a finite group and let $M$ be a free abelian group with a faithful $G$-action, and identify $G$ with its image in $\Aut(M)$. Let $\widehat M$ denote the profinite completion of $M$ with the induced $G$-action. Let $q = |G|$.
	
	First we note that $H^n(G, \widehat M)$ is canonically isomorphic to $H^n(G,M)$ for all $n\geq 1$. The multiplication-by-$q$ maps 
	\[H^n(G, M)\stackrel{\cdot q}{\longrightarrow} H^n(G,  M), \quad  H^n(G, \widehat M)\stackrel{\cdot q}{\longrightarrow} H^n(G, \widehat M)\]
	are trivial \cite[Proposition III.10.1]{Brown}. We then have the following commuting diagram of long exact sequences for $n\geq 1$
	\[\begin{tikzcd} 
	H^{n-1}(G, M/qM) \ar[twoheadrightarrow]{r} \ar[equal]{d} & H^n(G,M) \ar{r}{\cdot q}[swap]{0} \ar{d} & H^n(G, M) \ar[hookrightarrow]{r} \ar{d} & H^n(G, M/qM) \ar[equal]{d} \\ 
	H^{n-1}(G, M/qM) \ar[twoheadrightarrow]{r} & H^n(G, \widehat M) \ar{r}{\cdot q}[swap]{0} & H^n(G, \widehat M) \ar[hookrightarrow]{r} & H^n(G, M/qM)
	\end{tikzcd}\]
	The second vertical map shows that $H^n(G,M)\to H^n(G,\widehat M)$ is surjective, and the third shows that it is injective.

	Now let $q$ be any number such that the exponent of $H^2(G, M)$ divides $q$. Assume also that $q\geq 3$ so that $G$ necessarily embeds\footnote{This is a classical fact, attributed to Minkowski.} into $\Aut(M/qM)$. Note that such a number is $q=|G|$ (or $q=4$ if $|G|=2$). We then have a natural exact sequence
	\[\begin{tikzcd} 
	H^1(G, \widehat M) \ar{r} & H^{1}(G, M/qM) \ar[twoheadrightarrow]{r} & H^2(G, \widehat M) \ar{r} & 0
	\end{tikzcd}\]
	
	The group ${\cal N}_{\Aut(\widehat M)}(G)$ acts on each of these groups in a natural way. The action on $H^2(G,\widehat M)$ was described earlier. More generally if $N$ denotes either the module $\widehat{M}$ or one of the modules $M/qM$, then an element $\phi \in {\cal N}_{\Aut(\widehat M)}(G)$ acts on a cocycle $\zeta\in C^n(G,N)$ via 
	\[(\phi\cdot \zeta)(g_1,\ldots, g_n) = \phi(\zeta(\phi^{-1}g_1\phi, \ldots, \phi^{-1} g_n \phi)) \]
where of course $\phi\in\Aut\widehat M$ acts on $N=M/qM$ via the quotient $\Aut (\widehat M) \to \Aut (M/qM)$. These natural actions are compatible with the maps in the above exact sequence.
	
	An immediate consequence of the exact sequence above is that 
	\[{\cal N}_{\Aut(\widehat M)}(G) \cap \ker\left(\Aut(\widehat M) \to \Aut(M/qM)\right)\]
	acts trivially on $H^2(G,\widehat M)$. Thus, in computing the orbits of the action of ${\cal N}_{\Aut(\widehat M)}(G)$ on $H^2(G, \widehat M)$ we may consider the group acting to be the image of ${\cal N}_{\Aut(\widehat M)}(G)$ in $\Aut(M/qM)$.
	
	In our code, we implemented this computation in the following manner.
	\begin{enumerate}
		\item\label{Depth0} Firstly discard any \Z-class which contains only one or two space group types---this information is provided by the package \texttt{CrystCat}. Note that since the $\widehat \Z$-normalizer of $G$ certainly contains the \Z-normalizer of $G$, so the number of orbits under the $\widehat \Z$-normalizer is less than or equal to the number of space groups (i.e.\ orbits under the \Z-normalizer). If there is only one space group type, there is nothing to say. If there are two, we note that one of these orbits consists of the zero element of $H^2$ and cannot under any action by homomorphisms be carried to a non-zero element of $H^2$.
		\item Let $q= |G|$, or $q=4$ if $|G|=2$. The cohomology group $H^2(G,\widehat M) = H^2(G,M)$ is computed in the form of the quotient
		\[ H^2(G, M) = \frac{Z^1(G, M/qM)}{Z^1(G, M) + B^1(G, M/qM)} \]
		provided by the above exact sequence. The cocycle group $Z^1$ comprises the solutions, among maps $X\to M/qM$ for a generating set $X$ of $G$, to certain linear equations derived from the relators of $G$ and are readily computed.
		\item Having found $H^2(G,\widehat M)$, let $e$ be its exponent---or $e=4$ if the exponent happens to be 2. We next {\em recompute} $H^2(G,\widehat M)$ in the form 
		\[ H^2(G, M) = \frac{Z^1(G, M/eM)}{Z^1(G, M) + B^1(G, M/eM)} \]
		in which form we may readily apply the action of ${\cal N}_{\GL{d}{\Z{}/e\Z{}}}(G)$. This may seem like a redundant step (and mathematically it is), but it is a necessary concession to speed: the bottleneck in computations is the computation of the normalizer ${\cal N}_{\GL{d}{\Z{}/e\Z{}}}(G)$, the computation time of which increases exponentially\footnote{In the most extreme case we move from a normalizer computation in a group $\GL{4}{\Z/384\Z}$ of size around $3\times 10^{40}$ to a group \GL{4}{\Z/4\Z} of size around $10^9$ ---certainly a worthwhile saving!} with $e$.
		\item\label{Depth1} Compute, using the inbuilt \texttt{GAP} routines, the modulo $e$ normalizer $N={\cal N}_{\GL{d}{\Z/e\Z}}(G)$ and the number of orbits of the action of $N$ on $H^2(G,\widehat M)$. If the number of orbits under action of this group $N$ is equal to the number of space group types, we are done: making the acting group smaller cannot decrease the number of orbits.
		\item\label{Depth2} If after the previous stage we have fewer orbits than desired, we refine our computations by studying the action of a certain subgroup of $N$: the image of $N'={\cal N}_{\GL{d}{\widehat\Z}}(G)$ in $N$. This is done by the methods in the previous section: for a particular element $n\in N$, conjugacy by which takes $G$ to itself, we may decide whether it lifts to an element of \GL{d}{\widehat\Z} having the same effect. The action of the group $N'$ on $H^2(G,\widehat M)$ may now be computed, and the number of orbits is equal to the number of isomorphism types of profinite completions of space groups in this \Z-class. This may now be compared with the number of space group types.
	\end{enumerate}
	
	\begin{table}[ht]
		\centering
		Dimension 3:
		
		\begin{tabular}{|l|r|}
			\hline
			Stage of process & \Z-classes verified \\ \hline
			\ref{Depth0} &  47        \\ \hline 
			\ref{Depth1} &  25        \\ \hline 
			\ref{Depth2} &   1        \\ \hline \hline
			Total \Z-classes verified  & 73 \\  \hline
			Total number of \Z-classes & 73 \\  \hline
		\end{tabular}
		
		\vspace{1cm}
		Dimension 4:
		
		\begin{tabular}{|l|r|}
			\hline
			Stage of process & \Z-classes verified \\ \hline
			\ref{Depth0} &     340    \\ \hline
			\ref{Depth1} &     293    \\ \hline
			\ref{Depth2} &      77    \\ \hline \hline
			Total \Z-classes verified  & 710 \\ \hline
			Total number of \Z-classes & 710 \\ \hline
		\end{tabular}
		\caption{Breakdown of when it is verified that the space group types in \Z-classes are shown to have distinct profinite completions. The `stage' numbers refer to the enumerated list in Section \ref{sec:ActionOfNorm}. Dimension 2 is once again ignored for this summary: only one \Z-class survives Stage 1, and is verified at Stage 4.}\label{CohomOrbitFigures}
	\end{table}
	
The number of \Z-classes which are shown, at each stage, to have the property that the associated crystallographic group types have distinct profinite completions are tabulated in Table \ref{CohomOrbitFigures}. It is seen that in dimensions two, three and four all \Z-classes have this property. When combined with the results from the previous subsection, this completes the establishment of the following theorem.

\begin{theorem}
Let $\Gamma_1$ and $\Gamma_2$ be crystallographic groups of dimension at most 4. If $\widehat\Gamma_1\iso \widehat\Gamma_2$ then $\Gamma_1\iso\Gamma_2$.
\end{theorem}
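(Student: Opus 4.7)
The plan is to combine the structural results of Sections~2--4 with the computational machinery of Section~5 into two independent verifications. First, I would invoke Proposition~\ref{UniqueOpenMASubgp}: any topological isomorphism $\widehat\Gamma_1 \iso \widehat\Gamma_2$ must send the translation subgroup $\widehat M_1$ to $\widehat M_2$, giving rise to the commutative diagram \eqref{eqnIsoOfSES} and hence an isomorphism of the point groups $G_1 \iso G_2$. Choosing $\Z$-bases realises $G_1$ and $G_2$ as conjugate finite subgroups of $\GL{d}{\widehat\Z}$.

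This splits the problem into two sub-questions. The first is whether $\widehat\Z$-conjugacy of the point groups forces $\Z$-conjugacy, i.e.\ whether $\Gamma_1$ and $\Gamma_2$ share an arithmetic crystal class. The second, conditional on an affirmative answer to the first, is whether within a fixed $\Z$-class the profinite isomorphism type of a space group determines the space group up to isomorphism. By the proposition concluding Section~4, this second question reduces to comparing the number of orbits of ${\cal N}_{\Aut\widehat M}(G)$ on $H^2(G,\widehat M)$ with the number of discrete space-group types in the class (orbits of ${\cal N}_{\Aut M}(G)$ on $H^2(G,M)$ by the Fundamental Theorem of Mathematical Crystallography).

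In dimensions $d\in\{2,3,4\}$ both the $\Z$-classes and the space-group types are finite and enumerated by the \texttt{CrystCat} package, so I would dispatch each sub-question by computer. For the first, for every pair of $\Z$-classes sitting inside a common $\Q$-class I would cascade through progressively more expensive certificates of non-conjugacy---cohomology-group invariants, conjugacy modulo small primes, and then full $\Z[p]$-conjugacy testing via the Smith-normal-form lifting scheme of Section~\ref{sec:ConjZHat}---for $p=2,3,5$. For the second, within each $\Z$-class I would compute $H^2(G,\widehat M)$ using the exact sequence of Section~\ref{sec:ActionOfNorm} as a quotient involving $Z^1(G, M/eM)$, then compute the $\GL{d}{\Z/e\Z}$-normalizer of $G$ and count orbits on $H^2$; where this coarse count exceeds the space-group count I would refine by restricting to the image in $\GL{d}{\Z/e\Z}$ of ${\cal N}_{\GL{d}{\widehat\Z}}(G)$, detected via the same lifting procedure.

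The principal obstacle is computational cost: the normalizer computations in $\GL{d}{\Z/e\Z}$ scale very rapidly in $e$, forcing a two-stage cohomology computation in which one first obtains $H^2$ using $e=|G|$ and then re-expresses it using the smallest feasible $e$ dividing its exponent before doing the normalizer and orbit computation. I would also anticipate needing a handful of bespoke arguments: certain $\Z$-class pairs may evade non-conjugacy testing over each $\Z[p]$ individually and require testing over a product ring $\Z[\{p,q\}]$, and a small number of $\Z$-classes may require a further refinement pass before their cohomology orbit counts match the space-group counts recorded in \texttt{CrystCat}.
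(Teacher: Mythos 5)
Your proposal follows essentially the same route as the paper: reduce via Proposition~\ref{UniqueOpenMASubgp} to (a) deciding whether $\widehat\Z$-conjugacy of point groups forces $\Z$-conjugacy and (b) comparing orbit counts of the $\widehat\Z$-normalizer versus the $\Z$-normalizer on $H^2(G,\widehat M)$ within each \Z-class, then settle both computationally with the cascaded prime-by-prime conjugacy tests, the Smith-normal-form lifting, and the exponent-reduction trick for the normalizer computation. You even anticipate the one exceptional pair needing a test over a product ring $\Z[\{2,3\}]$, which is exactly what occurred for the \Q-class (4, 20, 11).
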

	
\subsection*{ Acknowledgements} The authors wish to thank an anonymous referee for making many valuable suggestions. The first author was supported by a CMS/SRIM bursary from the University of Cambridge and by a bursary from King's College, Cambridge. The second author was supported by a CMS/SRIM bursary from the University of Cambridge and by a bursary from Churchill College, Cambridge. The third author was supported by a Junior Research Fellowship from Clare College, Cambridge.

\bibliographystyle{alpha}
\bibliography{bibliography}
	
\end{document}